\newtheorem{prethm}{{\bf  Theorem}}
\newenvironment{thm}{\begin{prethm}{\hspace{-0.5
               em}{\bf .}}}{\end{prethm}}
\newtheorem{prepro}{{\bf  Theorem}}
\newenvironment{pro}{\begin{prepro}{\hspace{-0.5
               em}{\bf .}}}{\end{prepro}}
\newtheorem{precor}{{\bf  Corollary}}
\newenvironment{cor}{\begin{precor}{\hspace{-0.5
               em}{\bf .}}}{\end{precor}}
\newtheorem{preconj}{{\bf  Conjecture}}
\newtheorem{preremark}{{\bf  Remark}}
\newtheorem{prelem}{{\bf  Lemma}}
\newtheorem{preproof}{{\bf  Proof.}}
\newenvironment{proof}[1]{\begin{preproof}{\rm
               #1}\hfill{$\Box$}}{\end{preproof}}
\title{\large \ {\bf Planar, outerplanar and ring graph \\of the intersection graph}
\thanks
{{\it Key words}:  intersection graph, ring graph, planarity, outerplanarity. \newline
{\indent ~~{2010{ \it Mathematics Subject Classification}: 05C10, 05C25, 13A15.}}\newline
\newline}}
\author{\bf\small\sc  S. Khojasteh \\
\\ {\footnotesize {\em Department of Mathematics,  Lahijan Branch, Islamic Azad University, Lahijan, Iran}} \\ {\footnotesize {\em\newline s\_khojasteh@liau.ac.ir}}}
\date{}
\begin{document}
\maketitle
\begin{abstract}
Let $m, n > 1$ be two integers, and $\mathbb{Z}_n$ be a $\mathbb{Z}_m$-module. Let $I(\mathbb{Z}_m)^*$ be the set of all non-
zero proper ideals of $\mathbb{Z}_m$. The $\mathbb{Z}_n$-intersection graph of $\mathbb{Z}_m$, denoted by $G_n(\mathbb{Z}_m)$ is a graph with the vertex set $I(\mathbb{Z}_m)^*$, and two distinct vertices $I$ and $J$ are adjacent if and only if $I\mathbb{Z}_n\cap J\mathbb{Z}_n\neq 0$. In this paper, we determine the values of $m$ and $n$ for which $G_n(\mathbb{Z}_m)$ is planar, outerplanar or ring graph.
\end{abstract}

\section{ Introduction}
Let $R$ be a commutative ring, and $I(R)^*$ be the set of all non-zero proper ideals of $R$.
There are many papers on assigning a graph to a ring $R$, for instance see \cite{unicyclic},
\cite{conjecture}, \cite{and2}, \cite{atani} and \cite{principal}.
Also the intersection graphs of some algebraic structures such as groups, rings and modules have been studied by several authors, see \cite{akbnik, akbtaval, Chakrabarty, Cs}.
 In \cite{Chakrabarty}, the intersection graph of ideals of $R$, denoted by $G(R)$, was introduced as the graph with vertices $I(R)^*$ and for distinct $I,J\in I(R)^*$,
 the vertices $I$ and $J$ are adjacent if and only if $I\cap J\neq 0$. Also in \cite{akbtaval}, the intersection
graph of submodules of an $R$-module $M$, denoted by $G(M)$, is defined to be the graph whose vertices are
the non-zero proper submodules of $M$ and two distinct vertices are
adjacent if and only if they have non-zero intersection.
In \cite{heyGMR}, the $M$ intersection graph of ideals of $R$ denoted by $G_M(R)$, is defined to be the graph with the vertex set $I(R)^*$, and two distinct vertices $I$ and $J$ are adjacent
if and only if $IM \cap JM \neq 0$. Also, the $\mathbb{Z}_n$ intersection graph of $\mathbb{Z}_m$, $G_n(\mathbb{Z}_m)$ was studied in \cite{heyGMz}, where $n,m>1$ are integers and $\mathbb{Z}_n$ is a $\mathbb{Z}_m$-module. Clearly, if $n=m$, then $G_n(\mathbb{Z}_m)$ is exactly the same as the intersection graph of ideals of $\mathbb{Z}_m$. This implies that $G_n(\mathbb{Z}_m)$ is a generalization of $G(\mathbb{Z}_m)$. As usual, $\mathbb{Z}_m$ denotes the integers modulo $m$.

\indent Now, we recall some definitions and notations on graphs. Let $G$ be a graph with the vertex set $V(G)$ and the edge set $E(G)$. If $\{a,b\}\in E(G)$, we say $a$ is adjacent
to $b$ and write $a$ --- $b$. If $|V(G)|\geq2$, then a path from $a$ to $b$ is a series of adjacent vertices $a$ --- $x_{1}$ --- $x_{2}$ --- $\cdots$ --- $x_{n}$ --- $b$. For $a,b\in V(G)$ with $a\neq b$, $d(a,b)$ denotes the length of a shortest path from $a$ to $b$. If there is no such path, then we define $d(a,b)=\infty$.  We say that $G$ is {\it connected} if there is a path between
any two distinct vertices of $G$. A {\it cycle} is a path that begins and ends at the same vertex in which no edge is repeated and all vertices other than the starting and ending vertex are distinct. We denote the complete graph of order $n$ by $K_{n}$. A graph is {\it bipartite} if its vertices can be partitioned into two disjoint subsets $A$ and $B$ such that each edge connects a vertex from $A$ to one from $B$. A bipartite graph is a {\it complete bipartite} graph if every vertex in $A$ is adjacent to every vertex in $B$. We denote the complete bipartite graph, with part sizes $m$ and $n$ by $K_{m,n}$. The disjoint union of graphs $G_{1}$ and $G_{2}$, which is denoted by $G_{1}\cup G_{2}$, where $G_{1}$ and $G_{2}$ are two vertex-disjoint graphs, is a graph with $V(G_{1}\cup G_{2})=V(G_{1})\cup V(G_{2})$ and $E(G_{1}\cup G_{2})=E(G_{1})\cup E(G_{2})$. A graph $G$ may be expressed uniquely as a disjoint union of connected graphs. These graphs are called the connected components, or simply the components, of $G$. Recall that a graph is said to be {\it planar} if it can be drawn in the plane so that its edges intersect only at their ends. A {\it subdivision} of a graph is any graph that
can be obtained from the original graph by replacing edges by paths.

\indent In \cite{heyGMz}, the authors were mainly interested in the study of $\mathbb{Z}_n$
intersection graph of ideals of $\mathbb{Z}_m$. For instance, they determined the values of $m$ for which $G_n(\mathbb{Z}_m)$ is connected, complete, Eulerian or has a cycle. In this article, we determine all integer numbers $n$ and $m$ for which $G_n(\mathbb{Z}_m)$ is planar, outerplanar or ring graph.

\section{Results}
Let $n,m>1$ be integers and $\mathbb{Z}_n$ be a $\mathbb{Z}_m$-module. Clearly, $\mathbb{Z}_n$ is a $\mathbb{Z}_m$-module if and only if $n$ divides $m$. Throughout the paper, without loss of generality, we assume that $m=p_1^{\alpha_1}\cdots p_s^{\alpha_s}$ and $n=p_1^{\beta_1}\cdots p_s^{\beta_s}$, where $p_i$'s are distinct primes, $\alpha_i$'s are positive integers, $\beta_i$'s are non-negative integers, and $0\leq \beta_i\leq \alpha_i$ for $i=1,\ldots,s$. Let $S=\{1,\ldots,s\}$, $S'=\{i\in S \ :\,\beta _i\neq 0\}$. The cardinality of $S'$ is denoted by $s'$. Also, we denote the least common multiple of integers $a$ and $b$ by $[a,b]$. We write $a|b$ ($a\nmid b$) if $a$ divides $b$ ($a$ does not divide $b$). It is easy to see that $I(\mathbb{Z}_m)=\{d\mathbb{Z}_m : d$ divides $m \}$ and
$|I(\mathbb{Z}_m)^{*}|=\prod_{i=1}^{s}(\alpha_i+1)-2$. If $n|d$, then $d\mathbb{Z}_m$ is an isolated vertex of $G_n(\mathbb{Z}_m)$.
Obviously, $d_1\mathbb{Z}_m$ and $d_2\mathbb{Z}_m$ are adjacent in $G_n(\mathbb{Z}_m)$ if and only if $n\nmid [d_1,d_2]$.

In this section, we study the planarity of $G_n(\mathbb{Z}_m)$. A remarkable simple characterization of the planar graphs was given by Kuratowski in 1930.
\begin{pro}\label{Kuratowski}
{\rm \cite[Theorem 10.30]{bondy} (Kuratowski's Theorem) A graph is planar if and only if it contains no subdivision of either $K_5$ or $K_{3,3}$.}
\end{pro}

\begin{thm}\label{planar}{ Let $\mathbb{Z}_n$ be a $\mathbb{Z}_m$-module. Then $G_n(\mathbb{Z}_m)$ is planar if and only if one of the following holds:
\par $(1)$ $m=p_1^{\alpha_1},n=p_1^{\beta_1}$ and $\beta_1\leq 5$.
\par $(2)$ $m=p_1^{\alpha_1}p_2^{\alpha_2},n=p_1$ and $\alpha_2\leq 4$.
\par $(3)$ $m=p_1^{\alpha_1}p_2,n=p_1^{2}$.
\par $(4)$ $m=p_1^{\alpha_1}p_2p_3,n=p_1$.
\par $(5)$ $m=p_1p_2^{\alpha_2},n=p_1p_2^{2}$ and $\alpha_2=2,3,4$.
\par $(6)$ $m=p_1p_2p_3,n=p_1p_2$.
\par $(7)$ $m=p_1^{\alpha_1}p_2^{\alpha_2},n=p_1p_2$ and $\alpha_1,\alpha_2\leq 4$.
\par $(8)$ $m=n=p_1p_2p_3$.}
\end{thm}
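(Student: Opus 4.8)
The plan is to analyze the structure of $G_n(\mathbb{Z}_m)$ case by case according to the number $s$ of distinct prime divisors of $m$, using the adjacency rule that $d_1\mathbb{Z}_m$ and $d_2\mathbb{Z}_m$ are adjacent iff $n\nmid[d_1,d_2]$, and the observation that any divisor $d$ with $n\mid d$ is an isolated vertex. Since isolated vertices do not affect planarity, one can restrict attention to the subgraph induced by divisors $d$ of $m$ with $n\nmid d$, $d\neq 1$, $d\neq m$. The proof strategy is to find, for each configuration of $(m,n)$ \emph{not} in the list, an explicit subdivision of $K_5$ or $K_{3,3}$ (invoking Theorem~\ref{Kuratowski}), and conversely for each of the eight listed families to exhibit an explicit planar drawing (or argue the graph is small/simple enough to be obviously planar, e.g.\ a graph on few vertices, or a graph known to be planar such as one with no $K_5$ or $K_{3,3}$ minor by direct inspection).

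For the ``only if'' direction I would organize the forbidden configurations by $s$. When $s=1$, $m=p_1^{\alpha_1}$, $n=p_1^{\beta_1}$: the non-isolated vertices are $p_1^j\mathbb{Z}_m$ for $1\le j\le\beta_1-1$ together with those $p_1^j$ with $j\ge\beta_1$ being isolated; here $p_1^i\mathbb{Z}_m$ and $p_1^j\mathbb{Z}_m$ are adjacent iff $\max(i,j)<\beta_1$, so the non-isolated part is a clique of size $\beta_1-1$, which is planar iff $\beta_1-1\le 4$, i.e.\ $\beta_1\le 5$, giving (1). When $s=2$, write $m=p_1^{\alpha_1}p_2^{\alpha_2}$ and split into subcases by which of $\beta_1,\beta_2$ vanish; here one must count vertices and edges carefully, identify the relevant complete or complete bipartite subgraphs among the divisors $p_1^ap_2^b$, and pin down exactly when a $K_5$ or $K_{3,3}$ subdivision appears — this yields families (2),(3),(5),(7). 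When $s=3$, $m=p_1^{\alpha_1}p_2^{\alpha_2}p_3^{\alpha_3}$, the number of vertices grows quickly, so one expects planarity to force most exponents to be $1$ and $n$ to be small; a short argument bounding the exponents (again by exhibiting $K_5$ or $K_{3,3}$ subdivisions when they are too large) leaves only families (4),(6),(8). When $s\ge 4$ one shows $G_n(\mathbb{Z}_m)$ always contains one of the Kuratowski subgraphs and hence is never planar.

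For the ``if'' direction I would go through the eight families and in each case either draw the graph explicitly or describe it structurally. For instance in (1) with $\beta_1\le 5$ the relevant graph is $K_{\beta_1-1}$ with $\beta_1-1\le 4$, which is planar; in (8), $m=n=p_1p_2p_3$, the graph $G(\mathbb{Z}_{p_1p_2p_3})$ has six vertices (the divisors $p_1,p_2,p_3,p_1p_2,p_1p_3,p_2p_3$) with a completely determined small adjacency structure that one checks to be planar directly; the remaining families similarly reduce to bounded-size graphs or to graphs that are unions of a planar ``core'' with isolated vertices, and one exhibits a planar embedding in each.

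The main obstacle will be the bookkeeping in the $s=2$ and $s=3$ cases: one must be careful to handle the interplay between $\beta_1,\beta_2$ (and $\beta_3$) being zero or not, correctly enumerate the non-isolated vertices $p_1^ap_2^b\cdots$, and in each borderline subcase produce the precise Kuratowski subdivision or planar drawing. The inequalities on the $\alpha_i$ in families (2),(5),(7) are exactly the thresholds at which a $K_{3,3}$ subdivision first appears among the divisors, so identifying these thresholds sharply — and verifying that just below them the graph really is planar — is where the real work lies.
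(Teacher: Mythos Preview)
Your plan is essentially the paper's approach: for each $(m,n)$ outside the list exhibit an explicit Kuratowski obstruction, and for each of the eight families verify planarity directly. The only organizational difference is that the paper splits first by $s'$ (the number of primes actually dividing $n$), shows $s'\le 3$, and then within each $s'$ bounds $s$ and the exponents; your choice to split first by $s$ and then sub-case on which $\beta_i$ vanish produces the same case tree in a different order and works just as well.

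One concrete correction to your sketch: the thresholds on the $\alpha_i$ in families $(2)$, $(5)$, $(7)$ are \emph{not} where a $K_{3,3}$ subdivision first appears, but where a $K_5$ does. In each of these cases the relevant obstruction is simply a clique on five powers of a single prime, e.g.\ $p_2\mathbb{Z}_m,\,p_2^{2}\mathbb{Z}_m,\,p_2^{3}\mathbb{Z}_m,\,p_2^{4}\mathbb{Z}_m,\,p_2^{5}\mathbb{Z}_m$ once $\alpha_2\ge 5$ (any two such vertices have lcm a pure $p_2$-power, which $n$ cannot divide). Indeed the paper's proof never uses $K_{3,3}$ at all: every non-planarity is witnessed by an explicit $K_5$ on five well-chosen divisors. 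Keeping this in mind will save you from hunting for bipartite obstructions that are not there.
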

\begin{proof}
{ One side is obvious. For the other side assume that $G_n(\mathbb{Z}_m)$ is planar. With no loss of generality assume that $n=p_1^{\beta_1}\ldots p_{s'}^{\beta_s'}$, where $1\leq s'\leq s$ and $\beta_i\neq 0$, for $i=1,\ldots,s'$. We note that $s'\leq 3$. Since otherwise, $p_1\mathbb{Z}_m,p_2\mathbb{Z}_m,p_3\mathbb{Z}_m,p_4\mathbb{Z}_m,p_1p_2\mathbb{Z}_m$ forms a $K_5$, a contradiction. Consider three following cases:

\par{\bf Case 1.} $s'=1$. If $\beta_{1}\geq 6$, then $p_1\mathbb{Z}_m,p_1^{2}\mathbb{Z}_m,p_1^{3}\mathbb{Z}_m,p_1^{4}\mathbb{Z}_m,p_1^{5}\mathbb{Z}_m$ forms a $K_5$, a contradiction. Hence $\beta_{1}\leq 5$. If $s\geq 4$, then $p_2\mathbb{Z}_m,p_3\mathbb{Z}_m,p_4\mathbb{Z}_m,p_2p_3\mathbb{Z}_m,p_2p_4\mathbb{Z}_m$ forms a $K_5$, a contradiction. Therefore $s\leq 3$. There are three following subcases:

\par{\bf Subcase 1.} $s=1$. Clearly, $G_{p_1^{\beta_1}}(\mathbb{Z}_{p_1^{\alpha_1}})$ is planar, where $\beta_1\leq 5$. Therefore $(1)$ holds.

\par{\bf Subcase 2.} $s=2$. We note that $\alpha_2\leq 4$. Otherwise, $p_2\mathbb{Z}_m,p_2^{2}\mathbb{Z}_m,p_2^{3}\mathbb{Z}_m,p_2^{4}\mathbb{Z}_m,p_2^{5}\mathbb{Z}_m$ forms a $K_5$, a contradiction. If $\beta_{1}\geq 3$, then $p_1\mathbb{Z}_m,p_1^{2}\mathbb{Z}_m,p_2\mathbb{Z}_m,p_1p_2\mathbb{Z}_m,p_1^{2}p_2\mathbb{Z}_m$ forms a $K_5$, a contradiction. This implies that $\beta_{1}=1,2$. If $\beta_{1}=1$, then it is easy to check that $G_{p_1}(\mathbb{Z}_{p_1^{\alpha_1}p_2^{\alpha_2}})$ is planar, where $\alpha_2\leq 4$. Therefore $(2)$ holds. Now, let $\beta_{1}=2$. If $\alpha_2\geq 2$, then $p_1\mathbb{Z}_m,p_2\mathbb{Z}_m,p_1p_2\mathbb{Z}_m,p_2^{2}\mathbb{Z}_m,p_1p_2^{2}\mathbb{Z}_m$ forms a $K_5$, a contradiction. Therefore $\alpha_2=1$. Obviously, $G_{p_1^{2}}(\mathbb{Z}_{p_1^{\alpha_1}p_2})$ is planar and $(3)$ holds.

\par{\bf Subcase 3.} $s=3$. If $\alpha_2\geq 2$, then $p_2\mathbb{Z}_m,p_3\mathbb{Z}_m,p_2^{2}\mathbb{Z}_m,p_2p_3\mathbb{Z}_m,p_2^{2}p_3\mathbb{Z}_m$ forms a $K_5$, a contradiction. Hence $\alpha_2=1$. Similarly, we find that $\alpha_3=1$. If $\beta_1\geq 2$, then $p_1\mathbb{Z}_m,p_2\mathbb{Z}_m,p_3\mathbb{Z}_m,p_1p_2\mathbb{Z}_m,p_2p_3\mathbb{Z}_m$ forms a $K_5$, a contradiction. It is easy to see that $G_{p_1}(\mathbb{Z}_{p_1^{\alpha_1}p_2p_3})$ is planar and $(4)$ holds.

\par{\bf Case 2.} $s'=2$. If $\beta_1,\beta_2\geq 2$, then $p_1\mathbb{Z}_m,p_1^{2}\mathbb{Z}_m,p_2\mathbb{Z}_m,p_1p_2\mathbb{Z}_m,p_1^{2}p_2\mathbb{Z}_m$ forms a $K_5$, a contradiction. Hence with no loss of generality we may assume that $\beta_1=1$.

\par First assume that $\beta_2\geq 2$. If $\alpha_1\geq 2$, then $p_1\mathbb{Z}_m,p_1^{2}\mathbb{Z}_m,p_2\mathbb{Z}_m,p_1p_2\mathbb{Z}_m,p_1^{2}p_2\mathbb{Z}_m$ forms a $K_5$, a contradiction. Therefore $\alpha_1=1$. If $s\geq 3$, then $p_1\mathbb{Z}_m,p_2\mathbb{Z}_m,p_1p_2\mathbb{Z}_m,p_3\mathbb{Z}_m,p_1p_3\mathbb{Z}_m$ forms a $K_5$, a contradiction. Thus $s=2$. If $\beta_2\geq 3$, then $p_1\mathbb{Z}_m,p_2\mathbb{Z}_m,p_2^{2}\mathbb{Z}_m,p_1p_2\mathbb{Z}_m,p_1p_2^{2}\mathbb{Z}_m$ forms a $K_5$, a contradiction. Therefore $\beta_2=2$. If $\alpha_2\geq 5$, then $p_2\mathbb{Z}_m,p_2^{2}\mathbb{Z}_m,p_2^{3}\mathbb{Z}_m,p_2^{4}\mathbb{Z}_m,$ $p_2^{5}\mathbb{Z}_m$ forms a $K_5$, a contradiction. Therefore $\alpha_2\leq 4$. It is easy to see that $G_{p_1p_2^{2}}(\mathbb{Z}_{p_1p_2^{\alpha_2}})$ is planar, where $\alpha_2=2,3,4$ and $(5)$ holds.

\par Now, assume that $\beta_2=1$. Let $s\geq 3$. If $\alpha_3\geq 2$, then $p_1\mathbb{Z}_m,p_3\mathbb{Z}_m,p_3^{2}\mathbb{Z}_m,p_1p_3\mathbb{Z}_m,p_1p_3^{2}\mathbb{Z}_m$ forms a $K_5$, a contradiction. Hence $\alpha_3=1$. If $\alpha_1\geq 2$, then $p_1\mathbb{Z}_m,p_1^{2}\mathbb{Z}_m,p_3\mathbb{Z}_m,p_1p_3\mathbb{Z}_m,$ $p_1^{2}p_3\mathbb{Z}_m$ forms a $K_5$, a contradiction. Therefore $\alpha_1=1$ and similarly, $\alpha_2=1$. It is clear that $G_{p_1p_2}(\mathbb{Z}_{p_1p_2p_3})$ is planar and $(6)$ holds. Now, assume that $s=2$. Then we find that $\alpha_1,\alpha_2\leq 4$. Also, one can easily check that $G_{p_1p_2}(\mathbb{Z}_{p_1^{\alpha_1}p_2^{\alpha_2}})$ is planar, where $\alpha_1,\alpha_2\leq 4$. Therefore $(7)$ holds.

\par{\bf Case 3.} $s'=3$. If $s\geq 4$, then $p_1\mathbb{Z}_m,p_2\mathbb{Z}_m,p_3\mathbb{Z}_m,p_4\mathbb{Z}_m,p_1p_4\mathbb{Z}_m$ forms a $K_5$, a contradiction. Therefore $s=3$. If $\alpha_1\geq 3$, then $p_1\mathbb{Z}_m,p_1^{2}\mathbb{Z}_m,p_1^{3}\mathbb{Z}_m,p_2\mathbb{Z}_m,p_3\mathbb{Z}_m$ forms a $K_5$, a contradiction. Therefore $\alpha_1\leq 2$ and similarly $\alpha_2,\alpha_3\leq 2$. If $\beta _1\geq 2$, then $p_1\mathbb{Z}_m,p_2\mathbb{Z}_m,p_3\mathbb{Z}_m,p_1p_2\mathbb{Z}_m,p_1p_3\mathbb{Z}_m$ forms a $K_5$, a contradiction. Therefore $\beta _1=1$ and similarly we have $\beta _2=\beta _3=1$. If $\alpha_1=\alpha_2 =2$, then $p_1\mathbb{Z}_m,p_1^{2}\mathbb{Z}_m,p_2\mathbb{Z}_m,p_2^{2}\mathbb{Z}_m,p_1p_2\mathbb{Z}_m$ forms a $K_5$, a contradiction. Therefore $\alpha_1=1$ or $\alpha_2=1$. With no loss of generality, we may assume that $\alpha_1=1$. By The same argument as we saw we find that $\alpha_2=1$ or $\alpha_3=1$. Thus we have $\alpha_1=\alpha_2=\alpha_3=1$ or $\alpha_1=\alpha_2=1$ and $\alpha_3=2$. It is clear that if $\alpha_1=\alpha_2=\alpha_3=1$, then $G_n(\mathbb{Z}_m)$ is planar. Therefore $(8)$ holds. Also, $G_{p_1p_2p_3}(\mathbb{Z}_{p_1p_2p_3^{2}})$ is not planar because $p_3\mathbb{Z}_m,p_3^{2}\mathbb{Z}_m,p_2p_3\mathbb{Z}_m,p_2p_3^{2}\mathbb{Z}_m,p_2\mathbb{Z}_m$ forms a $K_5$ which is impossible. }
\end{proof}
\begin{cor}{ Let $m$ be a positive integer number. Then $G(\mathbb{Z}_m)$ is planar if and only if $m\in \{p_1,p_1^{2},p_1^{3},p_1^{4},p_1^{5},p_1p_2,p_1p_2^{2},p_1p_2p_3\}$.}
\end{cor}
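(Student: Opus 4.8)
The plan is to obtain this corollary as an immediate specialization of Theorem \ref{planar} to the diagonal case $n=m$. Recall from the introduction that when $n=m$ the graph $G_n(\mathbb{Z}_m)$ coincides with the intersection graph $G(\mathbb{Z}_m)$ of ideals of $\mathbb{Z}_m$; hence $G(\mathbb{Z}_m)$ is planar if and only if the pair $(n,m)=(m,m)$ satisfies one of the eight conditions $(1)$--$(8)$ of Theorem \ref{planar}. (For $m=1$ the graph has no vertices and is trivially planar, but $1$ is excluded by the standing hypothesis $m>1$, so we may ignore it.) Thus the whole proof reduces to scanning the eight cases and recording which moduli $m$ survive once the extra constraint $n=m$ is imposed.

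First I would discard the conditions in which $n$ is forced to be a \emph{proper} divisor of $m$: in $(2)$ and $(3)$ the prime $p_2$ divides $m$ but not $n$; in $(4)$ the prime $p_2$ (equivalently $p_3$) divides $m$ but not $n$; and in $(6)$ the prime $p_3$ divides $m$ but not $n$. In each of these four cases $n\neq m$, so they contribute nothing to the list.

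It then remains only to read off the admissible moduli from $(1)$, $(5)$, $(7)$ and $(8)$. Imposing $n=m$ forces $\beta_1=\alpha_1\leq 5$ in $(1)$, giving $m\in\{p_1,p_1^2,p_1^3,p_1^4,p_1^5\}$; forces $\alpha_2=2$ in $(5)$, giving $m=p_1p_2^2$; forces $\alpha_1=\alpha_2=1$ in $(7)$, giving $m=p_1p_2$; and is already built into $(8)$, giving $m=p_1p_2p_3$. Conversely, each value in the stated set falls under one of these four conditions with $n=m$, so by Theorem \ref{planar} the corresponding $G(\mathbb{Z}_m)$ is planar, which closes the equivalence. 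There is essentially no obstacle here: the argument is purely the bookkeeping of intersecting the classification of Theorem \ref{planar} with the line $n=m$, and the only point to watch is that the eight resulting values are pairwise distinct once the labelling of the primes is fixed, so nothing is double-counted.
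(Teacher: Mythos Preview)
Your proposal is correct and is exactly the intended derivation: the paper states this result as an immediate corollary of Theorem~\ref{planar} with no separate proof, and your case-by-case specialization to $n=m$ is precisely the bookkeeping that justifies it. Nothing further is needed.
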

Let $G$ be a graph with $n$ vertices and $m$ edges. We recall that a {\it chord} is any edge
of $G$ joining two nonadjacent vertices in a cycle of $G$. Let $C$ be a cycle of $G$. We
say $C$ is a {\it primitive cycle} if it has no chords. Also, a graph $G$ has the {\it primitive}
cycle property (PCP) if any two primitive cycles intersect in at most one edge.
The number $frank(G)$ is called the {\it free rank} of $G$ and it is the number of primitive
cycles of $G$. Also, the number $rank(G) = m- n + r$ is called the {\it cycle rank} of $G$,
where $r$ is the number of connected components of $G$. The cycle rank of G can be
expressed as the dimension of the cycle space of $G$. By \cite[Proposition 2.2]{Gitler}, we
have $rank(G)\leq frank(G)$. A graph $G$ is called a {\it ring graph} if it satisfies in one of
the following equivalent conditions (see \cite[Theorem 2.13]{Gitler}).
\par $(i)$ $rank(G) = frank(G)$,
\par $(ii)$ $G$ satisfies the PCP and $G$ does not contain a subdivision of $K_4$ as a subgraph.
Also, an undirected graph is an {\it outerplanar} graph if it can be drawn in the plane
without crossings in such a way that all of the vertices belong to the unbounded face
of the drawing. There is a characterization for outerplanar graphs that says a graph
is outerplanar if and only if it does not contain a subdivision of $K_4$ or $K_{2,3}$. By \cite[Proposition 2.17]{Gitler}, we find that
every outerplanar graph is a ring graph and every ring graph is a planar graph.

\par\noindent{\bf Example 1.}
In Fig.1, let $v_1=p_2\mathbb{Z}_{p_1p_2^{2}},v_2=p_1p_2\mathbb{Z}_{p_1p_2^{2}},v_3=p_2^{2}\mathbb{Z}_{p_1p_2^{2}},v_4=p_1\mathbb{Z}_{p_1p_2^{2}}$. Also, in Fig.2, assume that $v_1=p_1p_2\mathbb{Z}_{p_1p_2^{3}},v_2=p_1p_2^{2}\mathbb{Z}_{p_1p_2^{3}},v_3=p_2^{2}\mathbb{Z}_{p_1p_2^{3}},v_4=p_1\mathbb{Z}_{p_1p_2^{3}},v_5=p_2\mathbb{Z}_{p_1p_2^{3}},v_6=p_2^{3}\mathbb{Z}_{p_1p_2^{3}}$
\begin{center}
    \begin{tikzpicture}
        \GraphInit[vstyle=Classic]
        \Vertex[x=1,y=0,style={black,minimum size=3pt},LabelOut=true,Lpos=270,L=$v_3$]{4}
        \Vertex[x=1,y=1,style={black,minimum size=3pt},LabelOut=true,Lpos=90,L=$v_1$]{2}
        \Vertex[x=2.3,y=0,style={black,minimum size=3pt},LabelOut=true,Lpos=270,L=$v_4$]{6}
        \Vertex[x=2.3,y=1,style={black,minimum size=3pt},LabelOut=true,Lpos=90,L=$v_2$]{3}
        \Edges(2,3)
        \Edges(6,3)
        \Edges(2,6)
        \Edges(2,4)

    \end{tikzpicture}
\hspace{4cm}
    \begin{tikzpicture}
        \GraphInit[vstyle=Classic]
        \Vertex[x=3,y=0,style={black,minimum size=3pt},LabelOut=true,Lpos=270,L=$v_6$]{5}
        \Vertex[x=0,y=0,style={black,minimum size=3pt},LabelOut=true,Lpos=270,L=$v_4$]{4}
        \Vertex[x=0,y=1,style={black,minimum size=3pt},LabelOut=true,Lpos=90,L=$v_1$]{2}
        \Vertex[x=1.5,y=0,style={black,minimum size=3pt},LabelOut=true,Lpos=270,L=$v_5$]{6}
        \Vertex[x=3,y=1,style={black,minimum size=3pt},LabelOut=true,Lpos=90,L=$v_3$]{3}
        \Vertex[x=1.5,y=1,style={black,minimum size=3pt},LabelOut=true,Lpos=90,L=$v_2$]{1}
         \Edges(3,5)
          \Edges(5,6)
           \Edges(3,6)
            \Edges(6,4)
             \Edges(2,4)
              \Edges(2,6)

    \end{tikzpicture}

 \end{center}
\hspace{1.7cm}
Fig.1 $G(\mathbb{Z}_{p_1p_2^{2}})$ \hspace{4.1cm}
Fig.2 $G_{p_1p_2^{2}}(\mathbb{Z}_{p_1p_2^{3}})$ \hspace{1.9cm}

\par\noindent{\bf Example 2.}
In Fig.3, let $v_1=p_1\mathbb{Z}_{p_1p_2p_3},v_2=p_1p_2\mathbb{Z}_{p_1p_2p_3},v_3=p_2\mathbb{Z}_{p_1p_2p_3},v_4=p_1p_3\mathbb{Z}_{p_1p_2p_3},v_5=p_3\mathbb{Z}_{p_1p_2p_3},v_6=p_2p_3\mathbb{Z}_{p_1p_2p_3}$. In Fig.4, let $v_1=p_1p_2\mathbb{Z}_{p_1p_2p_3},v_2=p_1\mathbb{Z}_{p_1p_2p_3},v_3=p_1p_3\mathbb{Z}_{p_1p_2p_3},v_4=p_2\mathbb{Z}_{p_1p_2p_3},v_5=p_3\mathbb{Z}_{p_1p_2p_3},v_6=p_2p_3\mathbb{Z}_{p_1p_2p_3}$.

\begin{center}
\begin{tikzpicture}
        \GraphInit[vstyle=Classic]
        \Vertex[x=3,y=0,style={black,minimum size=3pt},LabelOut=true,Lpos=270,L=$v_6$]{5}
        \Vertex[x=0,y=0,style={black,minimum size=3pt},LabelOut=true,Lpos=270,L=$v_4$]{4}
        \Vertex[x=0,y=1,style={black,minimum size=3pt},LabelOut=true,Lpos=90,L=$v_1$]{2}
        \Vertex[x=1.5,y=0,style={black,minimum size=3pt},LabelOut=true,Lpos=270,L=$v_5$]{6}
        \Vertex[x=1.5,y=1,style={black,minimum size=3pt},LabelOut=true,Lpos=90,L=$v_2$]{1}
        \Vertex[x=3,y=1,style={black,minimum size=3pt},LabelOut=true,Lpos=90,L=$v_3$]{3}
         \Edges(3,5)
          \Edges(5,6)
           \Edges(3,6)
            \Edges(6,4)
             \Edges(2,4)
              \Edges(2,6)

    \end{tikzpicture}
\hspace{1cm}
\begin{tikzpicture}
        \GraphInit[vstyle=Classic]
        \Vertex[x=2,y=0,style={black,minimum size=3pt},LabelOut=true,Lpos=270,L=$v_6$]{1}
        \Vertex[x=3,y=1.5,style={black,minimum size=3pt},LabelOut=true,Lpos=0,L=$v_5$]{2}
        \Vertex[x=4,y=3,style={black,minimum size=3pt},LabelOut=true,Lpos=90,L=$v_3$]{3}
        \Vertex[x=1,y=1.5,style={black,minimum size=3pt},LabelOut=true,Lpos=180,L=$v_4$]{4}
        \Vertex[x=2,y=3,style={black,minimum size=3pt},LabelOut=true,Lpos=90,L=$v_2$]{5}
        \Vertex[x=0,y=3,style={black,minimum size=3pt},LabelOut=true,Lpos=90,L=$v_1$]{6}
         \Edges(1,2)
          \Edges(2,3)
           \Edges(4,5)
            \Edges(2,5)
             \Edges(1,4)
              \Edges(4,6)
              \Edges(3,5)
          \Edges(5,6)
           \Edges(4,2)

    \end{tikzpicture}

 \end{center}
\hspace{3cm}
Fig.3 $G_{p_1p_2}(\mathbb{Z}_{p_1p_2p_3})$ \hspace{2.5cm}
Fig.4 $G(\mathbb{Z}_{p_1p_2p_3})$\hspace{4.1cm}

\par\noindent{\bf Example 3.}
In Fig.5, let $v_1=p_2\mathbb{Z}_{p_1p_2^{3}},v_2=p_2^{2}\mathbb{Z}_{p_1p_2^{3}},v_3=p_2^{3}\mathbb{Z}_{p_1p_2^{3}},v_4=p_1p_2\mathbb{Z}_{p_1p_2^{3}},v_5=p_1\mathbb{Z}_{p_1p_2^{3}},v_6=p_1p_2^{2}\mathbb{Z}_{p_1p_2^{3}}$.

\begin{center}
\begin{tikzpicture}
        \GraphInit[vstyle=Classic]
        \Vertex[x=2,y=1,style={black,minimum size=3pt},LabelOut=true,Lpos=90,L=$v_1$]{1}
        \Vertex[x=1,y=-1,style={black,minimum size=3pt},LabelOut=true,Lpos=180,L=$v_2$]{2}
        \Vertex[x=3,y=-1,style={black,minimum size=3pt},LabelOut=true,Lpos=0,L=$v_3$]{3}
        \Vertex[x=0,y=-2,style={black,minimum size=3pt},LabelOut=true,Lpos=270,L=$v_4$]{4}
        \Vertex[x=2,y=-2,style={black,minimum size=3pt},LabelOut=true,Lpos=270,L=$v_5$]{5}
        \Vertex[x=4,y=-2,style={black,minimum size=3pt},LabelOut=true,Lpos=270,L=$v_6$]{6}

         \Edges(1,2)
          \Edges(2,3)
           \Edges(3,1)


    \end{tikzpicture}
\end{center}
\hspace{5cm}
Fig.5 $G_{p_1p_2}(\mathbb{Z}_{p_1p_2^{3}})$

\begin{thm}{ Let $\mathbb{Z}_n$ be a $\mathbb{Z}_m$-module. Then $G_n(\mathbb{Z}_m)$ is a ring graph if and only if one of the following holds:
\par $(1)$ $m=p_1^{\alpha_1},n=p_1^{\beta_1}$ and $\beta_1\leq4$.
\par $(2)$ $m=p_1^{\alpha_1}p_2^{\alpha_2},n=p_1$ and $\alpha_2\leq 3$.
\par $(3)$ $m=p_1^{\alpha_1}p_2,n=p_1^{2}$.
\par $(4)$ $m={p_1^{\alpha_1}p_2p_3},n=p_1$.
\par $(5)$ $m=p_1p_2^{\alpha_2},n=p_1p_2^{2}$ and $\alpha_2=2,3$.
\par $(6)$ $m=p_1p_2p_3,n=p_1p_2$.
\par $(7)$ $m=p_1^{\alpha_1}p_2^{\alpha_2},n=p_1p_2$ and $\alpha_1,\alpha_2\leq 3$.
\par $(8)$ $m=n=p_1p_2p_3$.}
\end{thm}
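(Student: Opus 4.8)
The natural strategy is to piggyback on Theorem~\ref{planar}. Since every ring graph is planar (by \cite[Proposition 2.17]{Gitler}), any pair $(m,n)$ for which $G_n(\mathbb{Z}_m)$ is a ring graph must appear in the list (1)--(8) of Theorem~\ref{planar}. So the plan is: start from that list of eight families, and for each family decide exactly which parameter values actually yield a ring graph. The characterization I will use is condition $(ii)$ of the definition recalled above: $G$ is a ring graph if and only if it satisfies the PCP and contains no subdivision of $K_4$ as a subgraph. Thus for the ``only if'' direction I must, in each surviving family, exhibit a $K_4$-subdivision (or a PCP violation) whenever the parameters exceed the claimed bound; for the ``if'' direction I must check that each graph on the final list (8 families) is genuinely a ring graph, which by $(i)$ is most easily done by computing $rank(G)=|E|-|V|+r$ and $frank(G)$ (the number of primitive cycles) and verifying they agree --- this is exactly what Examples~1--3 and Figures~1--5 are set up to illustrate.

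Concretely, I would go family by family. For (1): $G_{p_1^{\beta_1}}(\mathbb{Z}_{p_1^{\alpha_1}})$ has vertices $p_1^j\mathbb{Z}_m$ and $p_1^i$ is adjacent to $p_1^j$ iff $\beta_1\nmid\max(i,j)$, i.e. iff $\max(i,j)<\beta_1$; so the non-isolated part is a clique on $\{p_1,\dots,p_1^{\beta_1-1}\}$ (a $K_{\beta_1-1}$) together with isolated vertices. Since $K_4$ is a ring graph-forbidden subgraph, $K_{\beta_1-1}$ is a ring graph iff $\beta_1-1\le 3$, i.e. $\beta_1\le 4$; this is where the bound drops from $5$ (planarity) to $4$ (ring graph). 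The analogous clique/near-clique analysis handles (2) (bound $\alpha_2\le3$ instead of $\le 4$), (5) ($\alpha_2\in\{2,3\}$ instead of $\{2,3,4\}$), and (7) ($\alpha_1,\alpha_2\le3$ instead of $\le 4$): in each case the extremal value allowed by planarity produces a configuration one checks to contain a $K_4$-subdivision, so it must be excluded. For (3), (4), (6), (8) the graphs are small and fixed (once the ``$\alpha_1$ arbitrary'' ones are understood, e.g. $G_{p_1}(\mathbb{Z}_{p_1^{\alpha_1}p_2p_3})$ has a simple structure independent of $\alpha_1$), and one verifies directly that they are ring graphs by the rank computation; these match the planar list unchanged.

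For the explicit small cases I would organize the verification through the figures already drawn: Fig.~1 shows $G(\mathbb{Z}_{p_1p_2^2})$, Fig.~2 and Fig.~5 show $G_{p_1p_2^2}(\mathbb{Z}_{p_1p_2^3})$ and $G_{p_1p_2}(\mathbb{Z}_{p_1p_2^3})$, Fig.~3 and Fig.~4 show $G_{p_1p_2}(\mathbb{Z}_{p_1p_2p_3})$ and $G(\mathbb{Z}_{p_1p_2p_3})$. In each I would: (a) list the primitive cycles, (b) check any two of them share at most one edge (PCP), (c) confirm no $K_4$-subdivision, and (d) as a cross-check compute $rank=|E|-|V|+r$ and count primitive cycles to see $rank=frank$. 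For the ``only if'' side within these borderline families (notably $\alpha_2=4$ in (2) and (5), $\alpha_i=4$ in (1) and (7)) the key is to name five or so specific ideals whose induced subgraph is a $K_4$ with a subdivided edge, or two primitive triangles sharing two edges; the edge rule $d_1\mathbb{Z}_m \sim d_2\mathbb{Z}_m \iff n\nmid[d_1,d_2]$ makes these adjacencies mechanical to verify.

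The main obstacle I anticipate is the ``if'' direction for families (2), (5), (7) at their extremal allowed values (e.g. $G_{p_1}(\mathbb{Z}_{p_1^{\alpha_1}p_2^3})$, $G_{p_1p_2^2}(\mathbb{Z}_{p_1p_2^3})$, $G_{p_1p_2}(\mathbb{Z}_{p_1^3p_2^3})$): these graphs have enough vertices that enumerating all primitive cycles and verifying PCP plus the absence of a $K_4$-subdivision is genuinely delicate — one has to be sure no subtle subdivision of $K_4$ hides among paths through isolated-looking branches, and that the primitive-cycle count is exact rather than an overcount. By contrast the ``only if'' direction is routine: it is just Theorem~\ref{planar}'s list pruned by finding one forbidden subgraph per excluded parameter value. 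I would therefore spend the bulk of the write-up on carefully presenting the ring-graph certificates (primitive cycle lists and the matching $rank=frank$ computations) for the eight families in the conclusion, leaning on the figures to keep the case analysis readable.
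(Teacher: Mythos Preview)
Your proposal is correct and follows essentially the same route as the paper: start from the planar classification of Theorem~\ref{planar}, then go through the eight families, exclude the extremal parameter values by exhibiting an explicit $K_4$ (the paper uses $p_1\mathbb{Z}_m,\dots,p_1^4\mathbb{Z}_m$ and the analogous quadruples in cases (1),(2),(5),(7)), and verify the remaining cases via the figures. The only minor difference is that for family~(7) the paper invokes the structural result \cite[Theorem 11]{heyGMz} giving $G_n(\mathbb{Z}_m)\setminus\mathfrak{A}\cong K_{\alpha_1}\cup K_{\alpha_2}$, which makes the verification immediate and dissolves the concern you flagged about $G_{p_1p_2}(\mathbb{Z}_{p_1^3p_2^3})$.
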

\begin{proof}{ One side is obvious. For the other side assume that $G_n(\mathbb{Z}_m)$ is a ring graph. Then it is planar and by Theorem \ref{planar}, we have eight following cases:

\par{\bf Case 1.} $m=p_1^{\alpha_1},n=p_1^{\beta_1}$ and $\beta_1\leq 5$. If $\beta_1= 5$, then $p_1\mathbb{Z}_m,p_1^{2}\mathbb{Z}_m,p_1^{3}\mathbb{Z}_m,p_1^{4}\mathbb{Z}_m$ forms a $K_4$, a contradiction. Therefore $\beta_1\leq 4$. It is easy to check that $G_n(\mathbb{Z}_m)$ is a ring graph and $(1)$ holds.

\par{\bf Case 2.} $m=p_1^{\alpha_1}p_2^{\alpha_2},n=p_1$ and $\alpha_2\leq 4$. Clearly, if $\alpha_2=4$, then $p_2\mathbb{Z}_m,p_2^{2}\mathbb{Z}_m,p_2^{3}\mathbb{Z}_m,$ $p_2^{4}\mathbb{Z}_m$ forms a $K_4$, a contradiction. Hence $\alpha_2 \leq 3$. Now, $G_n(\mathbb{Z}_m)$ has at most three non-isolated vertices and so it is a ring graph and $(2)$ holds.

\par{\bf Case 3.} $m=p_1^{\alpha_1}p_2,n=p_1^{2}$. In this case, $\{p_1\mathbb{Z}_m,p_2\mathbb{Z}_m,p_1p_2\mathbb{Z}_m\}$ is the set of all non-isolated vertices of $G_n(\mathbb{Z}_m)$. This yields that $G_n(\mathbb{Z}_m)$ is a ring graph and $(3)$ holds.

\par{\bf Case 4.} $m=p_1^{\alpha_1}p_2p_3,n=p_1$. It is easy to see that $G_{p_1}(\mathbb{Z}_{p_1^{\alpha_1}p_2p_3})$ is a ring graph and $(4)$ holds.

\par{\bf Case 5.} $m=p_1p_2^{\alpha_2},n=p_1p_2^{2}$ and $\alpha_2=2,3,4$. If $\alpha_2=4$, then $p_2\mathbb{Z}_m,p_2^{2}\mathbb{Z}_m,p_2^{3}\mathbb{Z}_m,p_2^{4}\mathbb{Z}_m$ forms a $K_4$, a contradiction. Therefore $\alpha_2=2,3$. By Fig.1 and Fig.2, $G_{p_1p_2^{2}}(\mathbb{Z}_{p_1p_2^{2}})$ and $G_{p_1p_2^{2}}(\mathbb{Z}_{p_1p_2^{3}})$ are ring graphs and $(5)$ holds.

\par{\bf Case 6.} $m=p_1p_2p_3,n=p_1p_2$. In this case, by Fig.3, we conclude that $G_n(\mathbb{Z}_m)$ is a ring graph and $(6)$ holds.

\par{\bf Case 7.} $m=p_1^{\alpha_1}p_2^{\alpha_2},n=p_1p_2$ and $\alpha_1,\alpha_2\leq 4$. If $\alpha_1=4$, then $p_1\mathbb{Z}_m,p_1^{2}\mathbb{Z}_m,p_1^{3}\mathbb{Z}_m,p_1^{4}\mathbb{Z}_m$ forms a $K_4$, a contradiction. Hence $\alpha_1\leq3$ and similarly, $\alpha_2\leq3$. Now, by \cite[Theorem 11]{heyGMz}, we have  $G_n(\mathbb{Z}_m)\setminus \mathfrak{A}\cong K_{\alpha_1}\cup K_{\alpha_2}$, where $\mathfrak{A}$ is the set of all non-isolated vertices of $G_n(\mathbb{Z}_m)$. Therefore $(7)$ holds.(see Fig.5)

\par{\bf Case 8.} $m=n=p_1p_2p_3$. By Fig.4, we find that $G_n(\mathbb{Z}_m)$ is a ring graph and $(8)$ holds.}
\end{proof}

\begin{cor}{ Let $m$ be a positive integer number. Then $G(\mathbb{Z}_m)$ is a ring graph if and only if $m\in \{p_1,p_1^{2},p_1^{3},p_1^4,p_1p_2,p_1p_2^{2},p_1p_2p_3\}$.}
\end{cor}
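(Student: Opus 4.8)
The plan is to obtain this corollary as an immediate specialization of the preceding ring-graph theorem. First I would recall from the introduction that $G(\mathbb{Z}_m)=G_m(\mathbb{Z}_m)$, i.e.\ the intersection graph of ideals of $\mathbb{Z}_m$ is precisely the case $n=m$ of $G_n(\mathbb{Z}_m)$. Hence $G(\mathbb{Z}_m)$ is a ring graph if and only if $G_m(\mathbb{Z}_m)$ is, and I may apply the eight-case classification of the previous theorem with the extra constraint $n=m$.

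Then I would run through the eight families one by one, asking in each case when the prescribed $n$ can equal the prescribed $m$. Case $(1)$: $p_1^{\beta_1}=p_1^{\alpha_1}$ with $\beta_1\le4$ forces $\alpha_1\le4$, so $m\in\{p_1,p_1^{2},p_1^{3},p_1^{4}\}$. Cases $(2)$, $(3)$, $(4)$ and $(6)$: here the prescribed $n$ is always a \emph{proper} divisor of $m$ (either some exponent on the $m$-side is strictly larger than on the $n$-side, or $m$ carries a prime absent from $n$), so $n=m$ never occurs and these families contribute nothing. Case $(5)$: $p_1p_2^{\alpha_2}=p_1p_2^{2}$ forces $\alpha_2=2$, giving $m=p_1p_2^{2}$. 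Case $(7)$: $p_1^{\alpha_1}p_2^{\alpha_2}=p_1p_2$ forces $\alpha_1=\alpha_2=1$, giving $m=p_1p_2$. Case $(8)$ is already on the diagonal, giving $m=p_1p_2p_3$. Taking the union of the values found yields exactly $\{p_1,p_1^{2},p_1^{3},p_1^{4},p_1p_2,p_1p_2^{2},p_1p_2p_3\}$; conversely each listed $m$ lies inside one of the surviving families, so each such $G(\mathbb{Z}_m)$ is a ring graph and the equivalence follows.

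The step I expect to require the most care — though it is still only bookkeeping — is applying the $n=m$ test to \emph{all} eight families rather than only the obviously diagonal ones, since families such as $(5)$ and $(7)$ are generically off-diagonal yet each contains a single instance with $n=m$ that must be retained. As a minor sanity check I would also note that for $n=m$ there are no isolated vertices (the isolation condition $n\mid d$ cannot hold for a nonzero proper ideal $d\mathbb{Z}_m$), so no degenerate case is hidden in the reduction from $G_n(\mathbb{Z}_m)$ to $G(\mathbb{Z}_m)$.
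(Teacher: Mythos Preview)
Your argument is correct and is exactly the approach the paper intends: the corollary is stated without proof immediately after the ring-graph theorem, so specializing that theorem to the diagonal case $n=m$ is the implied derivation, and your case-by-case check is accurate.

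One small correction to your closing sanity check: the condition $n\mid d$ is only \emph{sufficient} for $d\mathbb{Z}_m$ to be isolated, not necessary, and $G(\mathbb{Z}_m)$ can certainly have isolated vertices (for $m=p_1p_2$ the two vertices $p_1\mathbb{Z}_m$ and $p_2\mathbb{Z}_m$ are non-adjacent since $[p_1,p_2]=m$). This does not affect your main argument, which relies only on the theorem itself, but the remark as written is false.
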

Using theorems in this section and the proof of pervious theorem, we have the following result.
\begin{thm}{ Let $\mathbb{Z}_n$ be a $\mathbb{Z}_m$-module. Then $G_n(\mathbb{Z}_m)$ is outerplanar if and only if one of the following holds:
\par $(1)$ $m=p_1^{\alpha_1},n=p_1^{\beta_1}$ and $\beta_1\leq4$.
\par $(2)$ $m=p_1^{\alpha_1}p_2^{\alpha_2},n=p_1$ and $\alpha_2\leq 3$.
\par $(3)$ $m=p_1^{\alpha_1}p_2,n=p_1^{2}$.
\par $(4)$ $m={p_1^{\alpha_1}p_2p_3},n=p_1$.
\par $(5)$ $m=p_1p_2^{\alpha_2},n=p_1p_2^{2}$ and $\alpha_2=2,3$.
\par $(6)$ $m=p_1p_2p_3,n=p_1p_2$.
\par $(7)$ $m=p_1^{\alpha_1}p_2^{\alpha_2},n=p_1p_2$ and $\alpha_1,\alpha_2\leq 3$.
\par $(8)$ $m=n=p_1p_2p_3$.}
\end{thm}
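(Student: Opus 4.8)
The plan is to piggy-back entirely on the two preceding theorems. Since every outerplanar graph is a ring graph (by \cite[Proposition 2.17]{Gitler}), and we have already classified exactly when $G_n(\mathbb{Z}_m)$ is a ring graph, it suffices to check, for each of the eight families (1)--(8) appearing in the ring-graph theorem, whether the graph in fact avoids every subdivision of $K_4$ and of $K_{2,3}$ — equivalently, by the characterization stated in the excerpt, whether it is outerplanar. So the first step is to observe that ``outerplanar $\Rightarrow$ ring graph'' reduces the candidate list to precisely the eight families $(1)$--$(8)$ of the ring-graph theorem, and then argue that in fact all eight are outerplanar, so the lists coincide.

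Next I would go family by family, reusing the structural facts already extracted in the ring-graph proof. For $(2)$, $(3)$, and $(4)$ the graphs have very few non-isolated vertices: in case $(2)$ at most three (a path or triangle on $p_2\mathbb{Z}_m, p_2^2\mathbb{Z}_m, p_2^3\mathbb{Z}_m$ plus isolated vertices), in case $(3)$ exactly the three vertices $p_1\mathbb{Z}_m, p_2\mathbb{Z}_m, p_1p_2\mathbb{Z}_m$, and in case $(4)$ the explicitly describable small graph $G_{p_1}(\mathbb{Z}_{p_1^{\alpha_1}p_2p_3})$; any graph on at most three non-isolated vertices is trivially outerplanar, and the slightly larger case $(4)$ one checks directly has no $K_4$- or $K_{2,3}$-subdivision. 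For $(1)$, $G_{p_1^{\beta_1}}(\mathbb{Z}_{p_1^{\alpha_1}})$ with $\beta_1\le 4$ contains $K_{\beta_1-1}$ on $p_1\mathbb{Z}_m,\dots,p_1^{\beta_1-1}\mathbb{Z}_m$; since $\beta_1-1\le 3$ this is at most $K_3$, hence outerplanar. For $(7)$, I would invoke \cite[Theorem 11]{heyGMz} as in the ring-graph proof: removing the non-isolated vertices of the appropriate type leaves $K_{\alpha_1}\cup K_{\alpha_2}$ with $\alpha_1,\alpha_2\le 3$, and one assembles the full picture (as in Fig.~5) to see it is outerplanar.

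The cases requiring an actual drawing are $(5)$, $(6)$, and $(8)$, but these are exactly the graphs already drawn in Figures~1--4 of the excerpt. For $(5)$ with $\alpha_2=2,3$, Fig.~1 shows $G(\mathbb{Z}_{p_1p_2^2})$ and Fig.~2 shows $G_{p_1p_2^2}(\mathbb{Z}_{p_1p_2^3})$, both drawn in the plane with every vertex on the outer face, so both are outerplanar. For $(6)$, Fig.~3 exhibits a planar embedding of $G_{p_1p_2}(\mathbb{Z}_{p_1p_2p_3})$ with all vertices on the outer boundary. For $(8)$, the graph $G(\mathbb{Z}_{p_1p_2p_3})$ has six vertices and, reading off Fig.~4, nine edges forming three triangles glued along edges; one verifies it contains no $K_4$-subdivision (the PCP argument from the ring-graph proof already gives this) and no $K_{2,3}$-subdivision, and redraws it with all vertices on the unbounded face. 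Thus every family on the ring-graph list is outerplanar, and combined with the reduction in the first step the classification is complete.

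The main obstacle is case $(8)$: $G(\mathbb{Z}_{p_1p_2p_3})$ is the densest graph in the list (six vertices, nine edges, cycle rank $4$), so it is the one case where ``no $K_{2,3}$-subdivision'' is not immediate and where one genuinely has to produce an outerplanar embedding by hand rather than quoting a structural lemma. Everything else is either a graph on $\le 3$ non-isolated vertices, a graph of the form $K_{\le 3}$ plus pendants/isolated vertices, or a graph already embedded outerplanarly in a figure; the real content is confirming that the ring-graph examples of Figures~1--4 can in fact be taken with all vertices on the outer face, which the figures as drawn already accomplish.
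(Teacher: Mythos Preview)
Your proposal is correct and follows essentially the same approach as the paper: the paper's own proof is a single sentence deferring to the preceding ring-graph theorem and its proof, and you carry out exactly that deferral, just with more detail. Your identification of case~(8) as the only nontrivial verification is accurate; the graph $G(\mathbb{Z}_{p_1p_2p_3})$ is a triangulated hexagon (place the vertices in cyclic order $p_1p_2,\,p_1,\,p_1p_3,\,p_3,\,p_2p_3,\,p_2$), which confirms outerplanarity directly.
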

Also, we have the following corollary.
\begin{cor}{ Let $m$ be a positive integer number. Then $G(\mathbb{Z}_m)$ is outerplanar if and only if $m\in \{p_1,p_1^2,p_1^3,p_1^4,p_1p_2,p_1p_2^{2},p_1p_2p_3\}$.}
\end{cor}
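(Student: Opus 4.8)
The plan is to leverage the previous theorem on ring graphs together with the characterization that a graph is outerplanar if and only if it contains no subdivision of $K_4$ or $K_{2,3}$, and the fact (quoted from \cite{Gitler}) that every outerplanar graph is a ring graph. Since every outerplanar graph is a ring graph, any $G_n(\mathbb{Z}_m)$ that is outerplanar must be one of the eight families listed in the ring graph theorem. So I would start from that list and, in each case, either confirm outerplanarity by exhibiting a crossing-free drawing with all vertices on the outer face, or rule it out by locating a subdivision of $K_4$ or $K_{2,3}$.

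First I would dispose of the cases with few non-isolated vertices. In cases $(2)$, $(3)$, $(4)$ of the ring graph theorem, the number of non-isolated vertices is at most three (for $(2)$ with $\alpha_2\leq 3$, for $(3)$ the non-isolated set is $\{p_1\mathbb{Z}_m,p_2\mathbb{Z}_m,p_1p_2\mathbb{Z}_m\}$, for $(4)$ one checks directly), so those graphs are obviously outerplanar; isolated vertices never obstruct outerplanarity. For case $(1)$, $G_{p_1^{\beta_1}}(\mathbb{Z}_{p_1^{\alpha_1}})$ with $\beta_1\leq 4$: the non-isolated vertices are $p_1\mathbb{Z}_m,\dots,p_1^{\beta_1-1}\mathbb{Z}_m$ with any two adjacent, i.e. a $K_{\beta_1-1}$, which for $\beta_1\leq 4$ is at most $K_3$ and hence outerplanar. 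Here I should double-check that the ring-graph bound $\beta_1\leq 4$ is already tight for outerplanarity too, which it is since $K_4$ appears exactly when $\beta_1=5$.

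Next I would handle the cases that actually constrain the answer further or require drawings. For case $(7)$, $G_{p_1p_2}(\mathbb{Z}_{p_1^{\alpha_1}p_2^{\alpha_2}})$ with $\alpha_1,\alpha_2\leq 3$: by \cite[Theorem 11]{heyGMz} removing the isolated-or-whatever set $\mathfrak{A}$ leaves $K_{\alpha_1}\cup K_{\alpha_2}$, so I would argue the whole non-isolated part decomposes in a way that avoids $K_4$ and $K_{2,3}$ — this is the step I would be most careful about, since one must track how the vertices divisible by $p_1p_2$ attach to both cliques and verify no $K_{2,3}$ subdivision is created (Fig.5 is the relevant picture for $p_1p_2^3$). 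For case $(5)$, $\alpha_2=2,3$, I would cite Fig.1 and Fig.2 and simply observe those drawings place all vertices on the outer face. For cases $(6)$ and $(8)$, $G_{p_1p_2}(\mathbb{Z}_{p_1p_2p_3})$ and $G(\mathbb{Z}_{p_1p_2p_3})$, I would use Fig.3 and Fig.4 respectively, confirming outerplanarity directly from those figures (note Fig.3 already appears as an outerplanar drawing with six vertices on a cycle plus chords arranged without crossing and without a $K_4$ or $K_{2,3}$ minor).

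The main obstacle I anticipate is case $(7)$: unlike the others it is an infinite family whose non-isolated subgraph is genuinely larger than a triangle or a short path, so ruling out a $K_{2,3}$-subdivision requires understanding the full adjacency structure — the vertices $d\mathbb{Z}_m$ with $d\mid p_1^{\alpha_1}p_2^{\alpha_2}$, $p_1p_2\nmid d$ split into the $p_1$-power side and the $p_2$-power side (these form the two cliques $K_{\alpha_1},K_{\alpha_2}$), plus vertices with $p_1p_2\mid d$ but $p_1p_2\nmid$ their lcm with others; I would need to check that for $\alpha_1,\alpha_2\leq 3$ no three vertices have two common neighbours through internally disjoint paths forming $K_{2,3}$, and that $\alpha_i=4$ would already force $K_4$ inside the clique $K_{\alpha_i}$. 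Everything else reduces to small finite graphs or to the figures already in the paper, so once case $(7)$ is settled the corollary about $G(\mathbb{Z}_m)$ (i.e. $n=m$) follows by specializing $n=m$ in the eight conditions exactly as in the ring graph corollary.
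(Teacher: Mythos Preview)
Your approach is essentially the paper's: deduce the outerplanar classification from the ring-graph classification by checking each of the eight cases, then specialize to $n=m$ exactly as in the ring-graph corollary. One small clarification on your flagged ``main obstacle'': in case~(7) the vertices $d\mathbb{Z}_m$ with $p_1p_2\mid d$ are isolated (since $n=p_1p_2$ divides $d$, hence $d\mathbb{Z}_n=0$), so they attach to nothing; the non-isolated part is literally the disjoint union $K_{\alpha_1}\cup K_{\alpha_2}$ and there is no $K_{2,3}$ issue to track.
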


\end{document}